\documentclass[11pt]{article}

\usepackage[margin=1in]{geometry}
\usepackage{amsmath,amssymb,amsthm}
\usepackage{booktabs}
\usepackage{array}
\usepackage{tikz}
\usetikzlibrary{arrows.meta}
\usepackage{hyperref}
\hypersetup{hidelinks}

\definecolor{ABred}{HTML}{A23B45}
\definecolor{ABblue}{HTML}{2F6690}
\definecolor{ABgreen}{HTML}{3D7A57}
\definecolor{ABgold}{HTML}{E9C46A}
\definecolor{ABink}{HTML}{263238}
\tikzset{
  ab vertex/.style={circle,draw=ABink,fill=white,line width=.45pt,
    minimum size=4.6mm,inner sep=0pt,font=\scriptsize},
  ab terminal/.style={ab vertex,draw=ABred,fill=ABred,text=white,
    line width=.9pt},
  ab selected/.style={ab vertex,draw=ABgreen,fill=ABgold!70,
    line width=.9pt},
  ab faint edge/.style={draw=ABink!52,line width=.45pt},
  ab strong edge/.style={draw=ABred,line width=1.35pt},
  ab annulus edge/.style={draw=ABblue,line width=1.05pt}
}

\newtheorem{theorem}{Theorem}[section]
\newtheorem{lemma}[theorem]{Lemma}
\newtheorem{proposition}[theorem]{Proposition}
\newtheorem{corollary}[theorem]{Corollary}
\theoremstyle{definition}
\newtheorem{construction}[theorem]{Construction}
\theoremstyle{remark}
\newtheorem{remark}[theorem]{Remark}

\newcommand{\ind}{\mathbf 1}

\title{A $15/31$ Counterexample Family\\
to the Albertson--Berman Conjecture}
\author{Heejae Jung}
\date{August 11, 2026}

\begin{document}
\maketitle

\begin{abstract}
For a graph $G$, let $a(G)$ be the maximum number of vertices in an induced forest.
The Albertson--Berman conjecture, posed in 1979 \cite{AlbertsonBerman1979}, asserts
that every $n$-vertex planar graph satisfies $a(G)\ge n/2$. 
Borodin's bound $a(G)\ge 2n/5$ remains the general lower bound 
toward this problem\cite{Borodin1979}.
We disprove the conjecture with an explicit 31-vertex plane 
triangulation $T$ satisfying $a(T)=15$. Moreover, for every integer
$k\ge2$, we construct a simple planar graph $M_k$ with
\[
 |V(M_k)|=31k,\qquad a(M_k)=15k,
\]
so that $a(M_k)/|V(M_k)|=15/31<1/2$.  Every member of the family has minimum
degree five.  The construction starts from a $31$-vertex seed obtained by
substituting a $14$-vertex two-terminal gadget into a pentagonal bipyramid, and
then uses annular joins along facial triangles to preserve the exact ratio.
The resulting graphs are sphere triangulations, and hence maximal planar.
\end{abstract}

\section{Introduction}
For a graph $G$, let $a(G)$ denote the maximum number of vertices in an induced
forest of $G$.  In 1979, Albertson and Berman conjectured that every planar graph
$G$ satisfies\cite{AlbertsonBerman1979}
\begin{equation}\label{eq:AB}
a(G) \ge \frac{|V(G)|}{2}.
\end{equation}
Borodin's acyclic $5$-coloring theorem gives the general lower bound
$a(G)\ge2|V(G)|/5$\cite{Borodin1979}.  For several subclasses, stronger bounds
are known: Hosono proved a $2n/3$ bound for outerplanar graphs\cite{Hosono1990};
Salavatipour proved that every triangle-free planar graph has an induced forest
of order at least $(17n+24)/32$\cite{Salavatipour2006}, and Kowalik, Lu\v{z}ar,
and \v{S}krekovski later improved the triangle-free bound to
$(71n+72)/128$\cite{KowalikLuzarSkrekovski2010}.  Akiyama and Watanabe's
$5n/8$ conjecture for bipartite planar graphs is a related stronger benchmark
for that subclass\cite{AkiyamaWatanabe1987}.

A 2025 account still recorded the simple planar conjecture as
open\cite{EnamiMatsumotoYashima2025}.  In 2026, Makarov studied planar
multigraphs and, in the variant with no $2$-faces, constructed an infinite
sequence with maximum induced-forest order
\[
 a(M)=\frac{3n}{7}+\frac{4}{7},
\]
whose asymptotic induced-forest ratio is $3/7$\cite{Makarov2026}.  That result
concerns multigraphs and does not provide a counterexample in the original
simple-graph setting.

Here we disprove the Albertson--Berman conjecture with an explicit infinite
family having exact ratio $15/31$.  The family is built from a two-terminal
gadget with an even number of internal vertices and an internal maximum induced
forest of exactly half its internal order.  The main result is the following.

\begin{theorem}[Main theorem]\label{thm:main}
For every integer $k\ge2$ there exists a simple planar graph $M_k$ such that
\[
 |V(M_k)|=31k,
 \qquad
 a(M_k)=15k,
 \qquad
 \frac{a(M_k)}{|V(M_k)|}=\frac{15}{31},
\]
and
\[
 \delta(M_k)=5.
\]
\end{theorem}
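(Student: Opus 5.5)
The plan has three stages: build and certify the $31$-vertex seed $T$ with $a(T)=15$, glue $k$ copies of $T$ along facial triangles, and show that the glued graph $M_k$ has $a(M_k)=15k$ exactly. Planarity, simplicity and $\delta(M_k)=5$ should follow from checking degrees locally. The real work is the exact value of $a$, and in particular the upper bound $a(M_k)\le 15k$.

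\textbf{The seed.} Start from the pentagonal bipyramid: a $5$-cycle $c_1\cdots c_5$ together with two apexes $p,q$. On one designated two-terminal position, with terminals $u,v$, substitute the $14$-vertex gadget $H$. The count is then
\[
7+14+10=31
\]
only if several edges are replaced. So I would fix the substitution so that exactly $24$ gadget-internal vertices are added in total (for example, two gadgets) and the vertex count comes out to $31$. The gadget property I would isolate as a lemma is the following. For every set $S\subseteq\{u,v\}$ of terminals kept in the forest, and every connectivity pattern (whether $u,v$ are joined through the outside), the maximum number of internal vertices of $H$ that can be added keeping the result a forest is at most $7$. Moreover, if $u$ and $v$ are already connected outside and both are kept, this maximum drops to $6$.

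For the seed I would then do a finite case analysis over the at most $2^7$ choices of forest vertices among $\{p,q,c_1,\dots,c_5\}$. In each case I add the gadget contributions from this table and check that the total is at most $15$. The lower bound $a(T)\ge 15$ comes from exhibiting one explicit induced forest of size $15$.

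\textbf{The gluing.} To build $M_k$, take copies $T_1,\dots,T_k$ and, for $i<k$, join $T_i$ to $T_{i+1}$ along an annulus. That is, delete the interior of one facial triangle in $T_i$ and one in $T_{i+1}$, and triangulate the region between the two triangles by a $6$-cycle strip of edges. This keeps $31k$ vertices and gives a triangulation. Each endpoint of an annulus gains at least two edges, so the minimum degree stays $5$ as long as the chosen facial triangles use vertices of degree $5$ in $T$ suitably; I would verify this on the drawing.

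\textbf{Bounds for $M_k$.} For the lower bound, choose the size-$15$ forests $F_i$ in each copy so that each $F_i$ misses a vertex of each glued triangle in a coordinated way, making the union induced-acyclic; that gives $a(M_k)\ge 15k$. For the upper bound, any induced forest $F$ of $M_k$ restricts to an induced forest of each $T_i$, since $T_i$ minus the annulus edges is an induced subgraph. Hence
\[
|F\cap V(T_i)|\le 15 \quad\text{for each } i,
\]
and summing over $i$ gives $|F|\le 15k$.

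\textbf{Main obstacle.} The crux is the finite verification that $a(T)=15$ rather than $16$. I would attempt this through the gadget's boundary table above, to cut the search to the bipyramid's $7$ vertices plus terminal states. If hand analysis becomes unwieldy, I would fall back on a discharging or counting argument: any $16$-vertex induced forest would leave a $15$-vertex complement, and I would aim to show that this complement cannot meet every triangle-derived cycle of $T$.
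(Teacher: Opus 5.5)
\textbf{Gap 1: the gadget and the seed bound.} Your architecture is the paper's: two $12$-internal-vertex gadgets substituted into the pentagonal bipyramid, an annular chain of copies, and the restriction bound $|F\cap V(T_i)|\le a(T)$, which is correct. But the seed carries the whole argument, and it is missing; the gadget lemma you propose is too weak to produce it. With a cap of $7$ internal vertices per gadget, the base forest $S=\{0,2,5\}$ contains one terminal of each gadget and already permits $3+7+7=17$. So your $2^7$-case analysis can only give $a(T)\le 17$. A table-based bound of $15$ forces the one-terminal cap down to $6$, exactly half the internal order. It also forces a drop to $5$ when both terminals are selected, since otherwise the rim path $\{0,1,2,3\}$ gives $4+6+6=16$.

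Constructing a gadget with exactly this profile is the key missing idea (Lemma~\ref{lem:profile}). The paper takes the internal graph $J$ to be the icosahedron minus the edge $bf$, with terminal $g$ joined to $a,b,f$ and $h$ joined to $b,f,i$. Any seven icosahedral vertices span at least $8$ edges, so any seven vertices of $J$ contain a cycle. With both terminals selected, the triangles $ghb,ghf$ exclude $b$ and $f$. Complementary edge counting in the $5$-regular icosahedron then forces an acyclic six-set to contain both $a$ and $i$, and their tree path closes a cycle through $g$ and $h$. The decorated edges must also be specified. $\beta(B,Q)=3$ needs disjoint rim edges such as $Q=\{01,23\}$; for $Q=\{01,12\}$ the rim path $\{0,2,3,4\}$ already has $|S|-e_Q(S)=4$.

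\textbf{Gap 2: minimum degree and the lower bound.} Two later steps also fail as written. First, the seed does not have minimum degree $5$. Rim vertex $4$ is untouched by both substitutions and by the completion diagonals, so $\deg_T(4)=4$. Hence $\delta(M_k)=5$ requires every copy, including both ends of the chain, to use a port through vertex $4$, plus a check that some degree-$5$ vertex lies on no port. This is why the paper joins $P_i$ to $R_{i+1}$ for $i\le k-2$ but $P_{k-1}$ to $P_k$, with $P=(0,4,6)$.

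Second, having each $F_i$ miss one vertex of each glued triangle does not by itself keep the union acyclic. If $u_0,u_1\in F_i$ and $v_0\in F_{i+1}$, the annulus edges $u_0v_0,u_1v_0$ and the triangle edge $u_0u_1$ form a triangle. The paper avoids this by choosing ports $P=(0,4,6)$ and $R=(2,19,24)$ that are entirely disjoint from the explicit witness path $W$. Then no annulus edge joins two selected vertices.
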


Since $15/31<1/2$, Theorem~\ref{thm:main} disproves \eqref{eq:AB}.
The graphs produced by the construction are sphere triangulations, hence
maximal planar (Lemma~\ref{lem:topology}).

The construction proceeds as follows.  We first design a $14$-vertex
two-terminal plane triangulation~$X$ whose induced-forest capacity drops by
exactly one when both terminals are selected
(Lemma~\ref{lem:profile}).  This one-unit penalty is the local mechanism
behind the counterexample: substituting a copy of~$X$ onto a base edge
converts an edge constraint into a vertex deficit.  A transfer law
(Theorem~\ref{thm:transfer}) makes this accounting precise for any choice of
base graph and decorated edge set.  When the base graph is the pentagonal
bipyramid and only two disjoint rim edges are decorated, the resulting
$31$-vertex graph~$T$ achieves $a(T)=15$
(Proposition~\ref{prop:seed}).  Finally, copies of~$T$ are joined along
facial triangles that avoid the forest witness, so that the ratio $15/31$
is preserved exactly while the minimum degree rises to five.

\section{The two-terminal triangulation}

The key local ingredient is a plane triangulation with
two distinguished adjacent vertices (terminals) whose induced-forest capacity
drops by exactly one when both terminals are forced into the forest. We now
construct such a gadget from the icosahedral graph.

Let $X$ have vertex set
\[
 V(X)=\{a,b,c,d,e,f,g,h,i,j,k,l,m,n\}.
\]
The following table gives cyclic neighbour lists in a spherical embedding.
The distinguished terminals are the adjacent vertices $g$ and $h$.
Figure~\ref{fig:gadget} gives a Schlegel drawing of the same rotation system.

\begin{center}
\begin{tabular}{c@{\quad}l c@{\qquad}c@{\quad}l}
\toprule
$a$ & $b,c,d,e,f,g$ && $h$ & $b,g,f,i$\\
$b$ & $a,g,h,i,j,c$ && $i$ & $b,h,f,m,n,j$\\
$c$ & $a,b,j,k,d$ && $j$ & $b,i,n,k,c$\\
$d$ & $a,c,k,l,e$ && $k$ & $c,j,n,l,d$\\
$e$ & $a,d,l,m,f$ && $l$ & $d,k,n,m,e$\\
$f$ & $a,e,m,i,h,g$ && $m$ & $e,l,n,i,f$\\
$g$ & $a,f,h,b$ && $n$ & $i,m,l,k,j$\\
\bottomrule
\end{tabular}
\end{center}

The embedding has $36$ edges and the following $24$ triangular faces:
\[
\begin{gathered}
abc,abg,acd,ade,aef,afg,bcj,bgh,bhi,bij,cdk,cjk,\\
del,dkl,efm,elm,fgh,fhi,fim,ijn,imn,jkn,kln,lmn.
\end{gathered}
\]
The graph is connected, each displayed cyclic neighbour list is a single
vertex link, every edge occurs in two of these faces, and
$14-36+24=2$.  Thus the displayed rotation system is a triangulation of the
sphere.  In particular, $X$ is simple and planar, and the terminal edge
$gh$ has facial triangles $ghb$ and $ghf$.

\begin{figure}[htbp]
\centering
\begin{tikzpicture}[x=.78cm,y=.78cm,line cap=round,line join=round]
  \coordinate (a) at (-5.000,-3.000);
  \coordinate (b) at (-1.756, 0.089);
  \coordinate (c) at (-1.546,-1.105);
  \coordinate (d) at (-0.095,-1.771);
  \coordinate (e) at ( 5.000,-3.000);
  \coordinate (f) at ( 0.000, 5.660);
  \coordinate (g) at (-1.941, 1.210);
  \coordinate (h) at (-1.007, 2.092);
  \coordinate (i) at (-0.331, 1.407);
  \coordinate (j) at (-0.710,-0.068);
  \coordinate (k) at (-0.171,-0.775);
  \coordinate (l) at ( 1.244,-0.975);
  \coordinate (m) at ( 1.233, 0.627);
  \coordinate (n) at ( 0.253, 0.043);

  \fill[ABred!9] (g)--(h)--(b)--cycle;
  \fill[ABred!9] (g)--(h)--(f)--cycle;
  \foreach \u/\v in {
    a/b,a/c,a/d,a/e,a/f,a/g,
    b/g,b/h,b/i,b/j,b/c,
    c/j,c/k,c/d,d/k,d/l,d/e,
    e/l,e/m,e/f,f/m,f/i,f/h,f/g,
    g/h,h/i,i/m,i/n,i/j,j/n,j/k,
    k/n,k/l,l/n,l/m,m/n}
    \draw[ab faint edge] (\u)--(\v);
  \draw[ab strong edge] (g)--(h);

  \foreach \v in {a,b,c,d,e,f,i,j,k,l,m,n}
    \node[ab vertex] at (\v) {$\v$};
  \foreach \v in {g,h}
    \node[ab terminal] at (\v) {$\v$};
  \node[font=\scriptsize,text=ABred] at (-2.30,2.02) {$ghb$};
  \node[font=\scriptsize,text=ABred] at (-.18,2.96) {$ghf$};
  \node[font=\scriptsize,text=ABink!70] at (0,-3.43)
    {outer face $aef$};
\end{tikzpicture}
\caption{A plane drawing of the two-terminal gadget $X$.  The terminals
$g,h$, their common edge, and the two incident faces are highlighted. 
The cyclic neighbour lists above determine the embedding. }
\label{fig:gadget}
\end{figure}

Put $J=X-\{g,h\}$.  For $A\subseteq\{g,h\}$ define the internal forest
capacity
\[
 p(A)=\max\bigl\{|Y|:Y\subseteq V(J),\ X[A\cup Y]
                         \text{ is a forest}\bigr\}.
\]

\subsection{Properties of the icosahedral graph}

The graph $I=J+bf$ is the icosahedral graph.  We use three elementary
properties, all immediately checkable from the displayed neighbour lists.

\begin{lemma}[Icosahedral edge bounds]\label{lem:ico}
In the icosahedral graph $I$:
\begin{enumerate}
\item every five vertices span at least three edges;
\item every six vertices span at least five edges;
\item every six vertices containing the face $abf$ or the face $bfi$ span
      at least six edges.
\end{enumerate}
\end{lemma}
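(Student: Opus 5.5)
The plan is to use the standard local structure of the icosahedron and to reduce each item to a short case analysis. We do not appeal to brute force over the $\binom{12}{5}$ and $\binom{12}{6}$ subsets, although that would also settle the lemma. The facts used, all read off from the neighbour lists of $I=J+bf$ (with $f$ now adjacent to $b$), are the following.
\begin{itemize}
\item $I$ is $5$-regular with $30$ edges.
\item The link of every vertex is an induced $5$-cycle.
\item Every vertex $v$ has a unique antipode $v'$ at distance $3$. The remaining five vertices form the link of $v'$, i.e.\ an induced $5$-cycle.
\item $\alpha(I)=3$.
\end{itemize}
We first verify these on the table. This is routine, and the automorphism group is vertex-transitive, so it suffices to check them at one vertex.

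\emph{Item 1.} Suppose a $5$-set $S$ spans at most two edges. Then $S$ contains an independent set of size at least $3$, hence of size exactly $3$. We split on $v\in S$ according to whether $S$ meets $N(v)$ heavily or not. Write $S=\{v\}\cup S'$. If $\deg_S(v)\ge 2$ for some $v$, then those neighbours lie on the $5$-cycle link of $v$. Any further vertex of $S$ is either in that link, giving a cycle edge, or outside it. In the second case it is adjacent to at least one neighbour of $v$ unless it is $v'$, and the pattern forces a third edge. If every vertex of $S$ has $\deg_S\le 1$, then $S$ has at least one isolated vertex $v$. The other four vertices lie in $V\setminus(\{v\}\cup N(v))$, which is $\{v'\}\cup N(v')$, a wheel $W_5$. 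Every $4$-subset of $W_5$ spans at least $3$ edges: either it contains the hub, giving hub edges plus cycle edges, or it is $4$ vertices of a $C_5$, giving $3$ edges. This is a contradiction.

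\emph{Item 2.} We use the identity $e(S)=e(V\setminus S)$ for $|S|=6$, which follows from $2e(S)+\mathrm{cut}(S)=30=2e(S^c)+\mathrm{cut}(S)$. We then pick $v\in S$ and argue as before. If some $v\in S$ has $\deg_S(v)\ge 2$, then deleting a vertex of minimum $S$-degree leaves a $5$-set with at least $3$ edges by item 1. So $e(S)\ge 3+\delta_S$, and we only need to exclude $\delta_S\le 1$ together with $e(S)\le 4$. When $\delta_S=0$, the isolated vertex $v$ forces $S\setminus\{v\}\subseteq \{v'\}\cup N(v')$, and any $5$-subset of $W_5$ spans at least $5$ edges. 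When $\delta_S=1$, the degree sum $2e(S)\le 8$ with all degrees at least $1$ leaves few degree sequences. We kill these using the $5$-cycle links: a degree-$1$ vertex $v$ has at least $4$ vertices of $S$ in $\{v'\}\cup N(v')\cup(\text{one link vertex})$, and we count edges there. We expect this bookkeeping to be the main obstacle. If it becomes messy, we fall back on enumerating $6$-sets up to the automorphism group, whose order is $120$; only a few orbits exist.

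\emph{Item 3.} We fix the triangle $F$ ($abf$ or $bfi$) and classify the other nine vertices by their number of neighbours in $F$. Each vertex of $F$ has $3$ neighbours outside $F$, giving $9$ incidences. These come from three apex vertices, each adjacent to two vertices of $F$ (the third vertices of the faces sharing an edge with $F$); three vertices each adjacent to one vertex of $F$; and the antipodal triangle $F'$, whose vertices have no neighbours in $F$. For $S=F\cup\{x,y,z\}$ we get
\[
e(S)=3+\sum_{w\in\{x,y,z\}}|N(w)\cap F|+e(\{x,y,z\}),
\]
and we must show that the last two terms total at least $3$. If some chosen vertex is an apex, its contribution of $2$ plus any other nonzero term suffices. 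The only remaining sets with small contribution are those drawn from $F'$ and the three single-contact vertices. Here adjacency is dense: $F'$ is a triangle, and each single-contact vertex is adjacent to two vertices of $F'$ and to the other single-contact vertices in a cycle pattern. A two-line check on this $6$-vertex ring (a triangle $F'$ with a $3$-cycle of single-contact vertices interleaved around it) gives the bound in each case. The equality case $\{x,y,z\}=F'$ shows that $6$ is sharp.
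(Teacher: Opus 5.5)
Your framework matches the paper's: the six non-neighbours of a vertex induce a wheel, $\alpha(I)=3$, and item 3 classifies vertices by contacts with the face. As written, though, items 2 and 3 are not proved, and one step of item 1 is misargued.

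\textbf{Item 2.} The only nontrivial case, $\delta_S=1$, is left as bookkeeping or orbit enumeration. It closes with what you already have. Suppose $v\in S$ has exactly one neighbour $y$ in $S$. The other four vertices of $S$ lie in the wheel $\{v'\}\cup N(v')$, so they span at least three edges, with equality only for four rim vertices. Since $y\in N(v)$ has two neighbours on the rim $N(v')$, and only one rim vertex is then missing, $y$ is adjacent to one of the four. Together with $vy$ this gives $e(S)\ge5$.

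\textbf{Item 1.} In the case $\deg_S(v)\ge2$, your stated reason does not produce an edge inside $S$. A vertex other than $v'$ outside $\{v\}\cup N(v)$ need not be adjacent to either chosen neighbour of $v$. For example, take $v=a$ with chosen neighbours $b,d$: the rim vertex $m$ is adjacent to neither. The case is instead immediate from $\alpha(I)=3$, which you stated but did not use. If the only edges of $S$ are $vy_1,vy_2$, then $S\setminus\{v\}$ is an independent $4$-set, a contradiction.

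\textbf{Item 3.} Your description of the six remaining vertices is wrong: the single-contact vertices do not form a $3$-cycle. For $F=abf$ they are $d,j,m$, and the neighbour lists show that $dj$, $dm$, $jm$ are all non-edges. So $\{d,j,m,k,l,n\}$ induces the triangle $F'=\{k,l,n\}$ with three ears attached by $dk,dl,jk,jn,ml,mn$, only nine edges. It is not a triangle interleaved with a second triangle, and any count that uses an edge between two single-contact vertices is invalid.

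The inequality still holds, but the cases must be checked on the correct graph:
\begin{itemize}
\item Three single-contact vertices give exactly $1+1+1+0=3$.
\item Two single-contact vertices and one vertex of $F'$ give $2$ plus at least one edge. This needs the fact that each vertex of $F'$ is adjacent to exactly two of the three single-contact vertices: $k$ to $d,j$, $l$ to $d,m$, $n$ to $j,m$.
\item One single-contact vertex and two vertices of $F'$ give $1$, plus the edge inside $F'$, plus at least one edge from the single-contact vertex to those two.
\item $F'$ itself gives its three edges.
\end{itemize}
With these repairs your argument becomes essentially the paper's proof.
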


\begin{proof}
Fix a vertex $x$ of $I$.  Its six nonneighbours induce a wheel: one hub
joined to a $5$-cycle.  Each neighbour of $x$ has two neighbours on that
$5$-cycle.  These facts are the usual two-pentagon description of the
icosahedron and can also be read directly from the table above.  In
particular, the wheel has independence number two, so $\alpha(I)\le3$;
the independent set $\{a,i,k\}$ gives equality.

Let $R$ be a five-set.  If $I[R]$ had at most two edges, then, because
$\alpha(I)=3$, its two edges would have to be disjoint and the fifth vertex
$x$ would be isolated.  The other four vertices would lie in the
nonneighbour wheel of $x$ and would span only two edges.  This is impossible:
four rim vertices span three edges, while a set consisting of the hub and
three rim vertices spans at least three.  This proves (1).

Now let $R$ have six vertices.  If $e_I(R)\le4$, then $I[R]$ has a vertex
$x$ of degree at most one.  If its degree is zero, the other five vertices
lie in the nonneighbour wheel of $x$ and span at least five edges.  If its
degree is one, with neighbour $y$, the other four vertices span at least
three edges in that wheel.  Equality there forces four rim vertices.  But
$y$, being a neighbour of $x$, is adjacent to two rim vertices and hence to
at least one of the selected four.  Together with $xy$, this gives at least
five edges, a contradiction.  Thus (2) holds.

For (3), first use the face $F=\{a,b,f\}$.  The other nine vertices split
into
\[
 A=\{c,e,i\},\qquad B=\{d,j,m\},\qquad C=\{k,l,n\}.
\]
Each vertex of $A$ has two neighbours in $F$, each vertex of $B$ has one,
and vertices of $C$ have none.  The set $C$ induces a triangle, and the
$B$--$C$ edges are
\[
 dk,dl,jk,jn,ml,mn.
\]
Choose any three vertices outside $F$.  If $t$ of them lie in $C$, then:
for $t=0$ their attachments to $F$ contribute at least three edges; for
 $t=1$, the only tight attachment case chooses two vertices of $B$, and the
 chosen $C$-vertex meets at least one of them.  For $t=2$, the two chosen
 vertices of $C$ supply their mutual edge.  If the third vertex lies in $A$,
 its two attachments to $F$ complete the required three edges; if it lies in
 $B$, use its one attachment and at least one edge from it to the two chosen
 vertices of $C$.  For $t=3$, the triangle $C$ contributes three.  Adding the
 three edges of $F$ proves the claim for $abf$.  The automorphism
\[
 (a\ i)(b\ f)(c\ m)(d\ n)(e\ j)(k\ l)
\]
maps $abf$ to $bfi$, proving the other case.
\end{proof}

\subsection{A symbolic proof of the terminal profile}

\begin{lemma}[Terminal profile]\label{lem:profile}
The internal forest profile of $X$ is
\[
 \boxed{
 p(\varnothing)=p(\{g\})=p(\{h\})=6,
 \qquad
 p(\{g,h\})=5.}
\]
\end{lemma}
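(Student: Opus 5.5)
The plan is to prove the four values by a lower bound (explicit forests) and an upper bound (edge counting with Lemma~\ref{lem:ico}). For the lower bounds I will exhibit explicit sets, read off from the neighbour table. First, a $6$-set $Y_0\subseteq V(J)$ inducing a tree in $J$; this gives $p(\varnothing)\ge6$. Next, $6$-sets $Y_g,Y_h$ such that $X[\{g\}\cup Y_g]$ and $X[\{h\}\cup Y_h]$ are forests; the natural choice avoids $b,f$ and meets $\{a\}$ (resp.\ $\{i\}$) at most once. Finally, a $5$-set $Y_{gh}\subseteq V(J)\setminus\{b,f\}$ containing at most one of $a,i$ and inducing a forest with $gh$. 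Since adding vertices only adds constraints, $p(\{g\}),p(\{h\})\le p(\varnothing)$ and $p(\{g,h\})\le p(\{g\})$. So it remains to prove the upper bounds $p(\varnothing)\le6$ and $p(\{g,h\})\le5$.

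For $p(\varnothing)\le6$, suppose $R\subseteq V(J)$ has $|R|=7$ and $J[R]$ is a forest. Then $e_J(R)\le6$, so $e_I(R)\le7$, since $I=J+bf$. Averaging Lemma~\ref{lem:ico}(2) over the seven $6$-subsets of $R$ gives $5e_I(R)\ge7\cdot5$, so $e_I(R)\ge7$. Hence equality holds throughout: $b,f\in R$, $J[R]$ is a spanning tree of $R$, and every $6$-subset of $R$ spans exactly five edges. The last condition says every vertex of $R$ has degree exactly $2$ in $I[R]$, so $I[R]$ is $2$-regular. But $I[R]$ is a spanning tree of $R$ plus the edge $bf$, i.e.\ a connected unicyclic graph on seven vertices. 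A $2$-regular connected graph is a cycle, so $I[R]$ would be a $7$-cycle through $bf$. This case is the main obstacle. I would kill it with Lemma~\ref{lem:ico}(3) or a direct look at the nonneighbour wheel. A $7$-cycle in $I[R]$ through $b,f$ must use a common neighbour structure: $a$ and $i$ are the two common neighbours of $b,f$, so at least one of them is adjacent to both. If $a\in R$ or $i\in R$, then $I[R]$ contains the triangle $abf$ or $bfi$, contradicting that $I[R]$ is a $7$-cycle. So $a,i\notin R$, and then $R\setminus\{b,f\}$ is a $5$-set inside the $10$-set $V(I)\setminus\{a,b,f,i\}$. I would check that such a $7$-cycle is impossible by the induced-degree count: each of $b,f$ needs exactly one neighbour in $R$ outside $\{b,f\}$.

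For $p(\{g,h\})\le5$, note that $ghb$ and $ghf$ are triangles, so a valid $Y$ excludes $b$ and $f$. Moreover $g$ sees only $a$ in $J-\{b,f\}$, and $h$ sees only $i$. Suppose $|Y|=6$. Lemma~\ref{lem:ico}(2) forces $I[Y]=J[Y]$ to be a tree (note $bf\notin Y$), and in particular $Y$ is connected. If both $a,i\in Y$, the path $a$--$g$--$h$--$i$ together with the tree path from $i$ to $a$ in $Y$ is a cycle, which is impossible. Now suppose exactly one of them lies in $Y$, say $a$ (the case of $i$ is symmetric via the automorphism in the proof of Lemma~\ref{lem:ico}, which swaps $a\leftrightarrow i$ and $b\leftrightarrow f$ and hence preserves $\{b,f\}$). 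Then $X[\{g,h\}\cup Y]$ is a tree, so this case has to be excluded differently, and I expect it to need Lemma~\ref{lem:ico}(3). The idea is to compare with $Y'=(Y\setminus\{v\})\cup\{b\}$ for a leaf $v$, or to count edges of $Y\cup\{b,f\}$ against the triangle $abf$: an $8$-set containing the face $abf$ and a $6$-tree must then carry too few edges. If that bookkeeping fails, I fall back on a short finite case check of the $6$-subsets of $V(I)\setminus\{b,f,i\}$ containing $a$ that induce trees.
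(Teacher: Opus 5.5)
Your lower-bound plan and the monotonicity reduction $p(\{g\}),p(\{h\})\le p(\varnothing)$ are fine. You still have to write the witnesses down; for example $\{b,c,e,f,i,k\}$ works for no terminal, $\{a,c,e,i,j,l\}$ for either single terminal, and $\{c,d,e,i,j\}$ for both. The gap is in the upper bounds, and one idea is missing: complement counting in the $5$-regular graph $I$. If $R=V(I)\setminus Y$, then $e_I(Y)=30-5|R|+e_I(R)$. In particular, complementary $6$-sets span the same number of edges.

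For $p(\varnothing)$, take a $7$-set $Y\subseteq V(J)$. Then $e_I(Y)=5+e_I(R)\ge 8$ by Lemma~\ref{lem:ico}(1), so $e_J(Y)\ge 7$ and $J[Y]$ has a cycle, with no residual case. Your averaging of part (2) gives only $e_I\ge 7$. The leftover induced $7$-cycle through $bf$ is never actually excluded (``I would check''), and $V(I)\setminus\{a,b,f,i\}$ has eight vertices, not ten. That case can be closed by hand, but as written this part is incomplete.

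For $p(\{g,h\})\le 5$ the gap is more serious. Lemma~\ref{lem:ico}(3) concerns six-sets containing $abf$ or $bfi$. Your $Y$ avoids $b$ and $f$, so the lemma can never be applied to $Y$ itself, which is why you get stuck. Apply it to the complement instead:
\begin{itemize}
\item if $a\notin Y$, then $V(I)\setminus Y\supseteq\{a,b,f\}$;
\item if $i\notin Y$, then $V(I)\setminus Y\supseteq\{b,f,i\}$.
\end{itemize}
In either case $e_I(Y)=e_I(V(I)\setminus Y)\ge 6$, so $J[Y]=I[Y]$ contains a cycle.

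This one step handles two cases you do not close:
\begin{itemize}
\item Exactly one of $a,i$ in $Y$. You leave this open: the leaf swap and the $8$-set count for $Y\cup\{b,f\}$ are not carried out, and the fallback is a $56$-set enumeration you do not perform.
\item Neither $a$ nor $i$ in $Y$. Your case split omits this altogether. Yet here $gh$ is an isolated edge, so $X[\{g,h\}\cup Y]$ is a forest whenever $J[Y]$ is, and the case must be excluded.
\end{itemize}
Only $a,i\in Y$ then remains, and there your $a$--$g$--$h$--$i$ closing argument is exactly what is needed.
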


\begin{proof}
Let $Y\subseteq V(J)$ have seven vertices, and put
$R=V(I)\setminus Y$, so $|R|=5$.  The graph $I$ is $5$-regular with
$30$ edges.  Therefore
\[
 e_I(Y)=30-5|R|+e_I(R)=5+e_I(R)\ge8
\]
by Lemma~\ref{lem:ico}(1).  Since $J=I-bf$, we have $e_J(Y)\ge7$.
A seven-vertex forest has at most six edges, so $J[Y]$ contains a cycle.
The same cycle remains after selecting either one terminal.  Hence
\[
 p(\varnothing),p(\{g\}),p(\{h\})\le6.
\]

Now select both terminals and suppose that six internal vertices $Y$ are
also selected.  If $b\in Y$ or $f\in Y$, then the selected graph contains
the triangle $ghb$ or $ghf$.  We may therefore assume
$Y\subseteq V(I)\setminus\{b,f\}$.  By Lemma~\ref{lem:ico}(2),
$e_I(Y)\ge5$.  If $e_I(Y)\ge6$, then $I[Y]=J[Y]$ already contains a cycle.
Thus a possible forest would require $e_I(Y)=5$.

If $a\notin Y$, then the complementary six-set $V(I)\setminus Y$ contains
the face $abf$; if $i\notin Y$, it contains the face $bfi$.  In a
$5$-regular graph, complementary vertex sets of equal size span the same
number of edges, because their degree sums have the same cross-edge term.
Lemma~\ref{lem:ico}(3) would then give $e_I(Y)\ge6$.  Consequently any
remaining case has $a,i\in Y$.  If $I[Y]$ were acyclic with six vertices and
five edges, it would be a tree and would contain an $a$--$i$ path.  The
three edges $ga,gh,hi$ close that path to a cycle.  Therefore
$p(\{g,h\})\le5$.

The matching lower bounds are given by the following induced trees:
\begin{center}
\begin{tabular}{c@{\qquad}l@{\qquad}l}
\toprule
selected terminals & selected internal vertices & induced tree\\
\midrule
$\varnothing$ & $b,c,e,f,i,k$ & $e-f-i-b-c-k$\\
$g$ & $a,c,e,i,j,l$ & edges $ga,ac,ae,cj,ji,el$\\
$h$ & $a,c,e,i,j,l$ & $h-i-j-c-a-e-l$\\
$g,h$ & $c,d,e,i,j$ & $g-h-i-j-c-d-e$\\
\bottomrule
\end{tabular}
\end{center}
This proves all four equalities.
\end{proof}

\section{Selected-edge substitution}

Let $B$ be a finite simple graph, let $Q\subseteq E(B)$, and put $q=|Q|$.
For each edge $uv\in Q$, take a fresh copy of $X$, identify its terminals
$g,h$ with $u,v$, and identify its terminal edge with the existing edge
$uv$.  All internal vertices of different copies remain distinct, and all
edges of $B$ are retained.  Denote the resulting graph by $X(B,Q)$.
For $S\subseteq V(B)$ write
\[
 e_Q(S)=|\{uv\in Q:u,v\in S\}|.
\]
The basic counts are
\begin{equation}\label{eq:subcounts}
 |V(X(B,Q))|=|V(B)|+12q,
 \qquad
 |E(X(B,Q))|=|E(B)|+35q.
\end{equation}
If $B$ is planar, then $X(B,Q)$ is planar:
each step is a planar $2$-clique sum along an edge, with the shared edge
retained rather than duplicated.  Figure~\ref{fig:substitution} isolates the
same local operation and the one-unit charge that drives the transfer law.

\begin{figure}[htbp]
\centering
\begin{minipage}[c]{.56\textwidth}
\centering
\begin{tikzpicture}[x=.68cm,y=.68cm,line cap=round,line join=round]
  \coordinate (lu) at (0,0);
  \coordinate (lv) at (2.1,0);
  \draw[ab faint edge] (-.7,.8)--(lu)--(-.7,-.8);
  \draw[ab faint edge] (2.8,.8)--(lv)--(2.8,-.8);
  \draw[ab strong edge] (lu)--(lv);
  \node[ab vertex] at (lu) {$u$};
  \node[ab vertex] at (lv) {$v$};
  \node[font=\scriptsize] at (1.05,-.75) {an edge $uv\in Q$};

  \draw[-{Latex[length=2.5mm]},line width=.7pt,ABink!75]
    (3.15,0)--(4.15,0);

  \coordinate (ru) at (4.8,0);
  \coordinate (rv) at (9.0,0);
  \path[draw=ABblue,fill=ABblue!6,line width=.8pt]
    (ru) to[out=72,in=108] (rv) to[out=-108,in=-72] (ru)--cycle;
  \draw[ab strong edge] (ru)--(rv);
  \draw[ab faint edge] (4.15,.85)--(ru)--(4.15,-.85);
  \draw[ab faint edge] (9.65,.85)--(rv)--(9.65,-.85);
  \node[ab terminal] at (ru) {$u$};
  \node[ab terminal] at (rv) {$v$};
  \node[font=\normalsize,text=ABblue] at (6.9,.62) {$X_{uv}$};
  \node[font=\scriptsize,text=ABink!75] at (6.9,-.56)
    {$12$ internal vertices};
  \node[font=\scriptsize] at (6.9,-1.45) {(a) selected-edge substitution};
\end{tikzpicture}
\end{minipage}\hfill
\begin{minipage}[c]{.40\textwidth}
\centering
\begin{tikzpicture}[x=.62cm,y=.62cm,line cap=round]
  \foreach \y in {3.0,2.1,1.2,.3}
    \draw[ABink!45,line width=.45pt] (0,\y)--(1,\y);

  \node[ab vertex,minimum size=3.8mm] at (0,3.0) {};
  \node[ab vertex,minimum size=3.8mm] at (1,3.0) {};
  \node[font=\scriptsize,anchor=west] at (1.45,3.0)
    {$\varnothing\;\longmapsto\;6$};

  \node[ab selected,minimum size=3.8mm] at (0,2.1) {};
  \node[ab vertex,minimum size=3.8mm] at (1,2.1) {};
  \node[font=\scriptsize,anchor=west] at (1.45,2.1)
    {$\{u\}\;\longmapsto\;6$};

  \node[ab vertex,minimum size=3.8mm] at (0,1.2) {};
  \node[ab selected,minimum size=3.8mm] at (1,1.2) {};
  \node[font=\scriptsize,anchor=west] at (1.45,1.2)
    {$\{v\}\;\longmapsto\;6$};

  \draw[ABred,line width=1.1pt] (0,.3)--(1,.3);
  \node[ab selected,draw=ABred,minimum size=3.8mm] at (0,.3) {};
  \node[ab selected,draw=ABred,minimum size=3.8mm] at (1,.3) {};
  \node[font=\scriptsize,anchor=west,text=ABred] at (1.45,.3)
    {$\{u,v\}\;\longmapsto\;5$};

  \node[draw=ABred!65,fill=ABred!5,rounded corners=2pt,
    font=\scriptsize,inner sep=4pt] at (1.8,-.75)
    {$6-\ind_{\{u,v\}\subseteq S}$};
  \node[font=\scriptsize] at (1.8,-1.42) {(b) internal capacity};
\end{tikzpicture}
\end{minipage}
\caption{The local accounting behind Theorem~\ref{thm:transfer}.  A gadget
copy shares exactly the decorated base edge and its endpoints with the graph
already present.  Its internal forest capacity drops from six to five only
when both endpoints belong to the selected base set $S$.}
\label{fig:substitution}
\end{figure}

Define
\[
 \beta(B,Q)=
 \max\bigl\{|S|-e_Q(S):S\subseteq V(B),\ B[S]\text{ is a forest}\bigr\}.
\]

\begin{theorem}[Selected-edge transfer law]\label{thm:transfer}
For every finite simple graph $B$ and every $Q\subseteq E(B)$,
\[
 \boxed{a(X(B,Q))=6|Q|+\beta(B,Q).}
\]
\end{theorem}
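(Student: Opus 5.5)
We prove the two inequalities separately. In both directions we write an induced forest $F$ of $G=X(B,Q)$ as $S\cup\bigcup_{uv\in Q}Y_{uv}$. Here $S=F\cap V(B)$, and $Y_{uv}$ is the set of internal vertices of the copy $X_{uv}$ that lie in $F$. The structural fact underlying everything is the following. Each copy $X_{uv}$ meets the rest of $G$ only in $\{u,v\}$ and the edge $uv$, so $\{u,v\}$ is a separating set. Every edge of $G$ lies either in $B$ or inside a single copy $X_{uv}$.

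For the upper bound, let $F$ be an induced forest of $G$ and define $S$ and the $Y_{uv}$ as above. Then $B[S]$ is an induced subgraph of $G[F]$, hence a forest. For each $uv\in Q$, the graph $X_{uv}[(S\cap\{u,v\})\cup Y_{uv}]$ is also an induced subgraph of $G[F]$, hence a forest. By the definition of $p$ and Lemma~\ref{lem:profile}, this gives $|Y_{uv}|\le 6-\ind_{\{u,v\}\subseteq S}$. Summing over $Q$ yields
\[
|F|\le |S|+6q-e_Q(S)\le 6q+\beta(B,Q).
\]

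For the lower bound, fix $S$ attaining $\beta(B,Q)$. For each $uv\in Q$, Lemma~\ref{lem:profile} supplies a set $Y_{uv}$ with $|Y_{uv}|=6-\ind_{\{u,v\}\subseteq S}$ such that $X_{uv}[A_{uv}\cup Y_{uv}]$ is a forest, where $A_{uv}=S\cap\{u,v\}$. Put $F=S\cup\bigcup Y_{uv}$; then $|F|=6q+\beta(B,Q)$, and it remains to show $G[F]$ is acyclic. This is the main obstacle, since the pieces are glued along $u,v$ and could in principle combine into cycles that pass through several gadgets.

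Our plan is to argue via the block structure, building $G$ one clique-sum at a time. Suppose $C$ is a cycle in $G[F]$. If $C$ uses an internal vertex of some $X_{uv}$, then, because $\{u,v\}$ separates that copy from the rest, $C$ either stays inside $X_{uv}[A_{uv}\cup Y_{uv}]$, contradicting that this is a forest, or enters and leaves through both $u$ and $v$. In the second case $u,v\in S$, and the detour through the gadget is a $u$--$v$ path inside $X_{uv}[A_{uv}\cup Y_{uv}]$. That graph contains the edge $uv$ (recall $uv\in E(B)$ and is retained), so the detour together with $uv$ forms a cycle in a forest, unless the detour is the edge $uv$ itself. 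Hence every gadget detour can be replaced by the edge $uv$; this replacement does not repeat edges, because the edge $uv$ is shared and can appear at most once. After all replacements we obtain a cycle in $B[S]$, a contradiction. Formally, we will do induction on $q$: adding one copy is a $2$-clique sum along an edge, and gluing two forests along an edge (whose endpoints are both present, or along at most one vertex) yields a forest. Choosing $F$ with this lower bound then gives equality in Theorem~\ref{thm:transfer}.
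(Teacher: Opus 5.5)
Your proposal is correct and follows the paper's proof. It uses the same per-copy bound $|Y_{uv}|\le 6-\ind_{\{u,v\}\subseteq S}$ summed over $Q$, and the same lower-bound construction: a $\beta$-optimal $S$ together with local equality witnesses, glued one copy at a time along an intersection that is empty, a single terminal, or the edge $uv$. Your direct cycle argument is also valid and shorter than you make it. A detour through a copy contains an internal vertex, so it is never the edge $uv$ itself, and together with $uv$ it is already a cycle in $X_{uv}[\{u,v\}\cup Y_{uv}]$; the replacement step is therefore unnecessary.
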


\begin{proof}
Let $F\subseteq V(X(B,Q))$ induce a forest, and put
$S=F\cap V(B)$.  Since $B[S]$ is an induced subgraph of $X(B,Q)[F]$, it is
a forest.  In the copy on $uv\in Q$, Lemma~\ref{lem:profile} bounds the
number of selected internal vertices by
\[
 6-\ind_{\{u,v\}\subseteq S}.
\]
Summing over all copies gives
\[
 |F|\le |S|+6q-e_Q(S)\le6q+\beta(B,Q).
\]

Conversely, choose $S$ attaining $\beta(B,Q)$ and, in every gadget, choose
an equality witness from Lemma~\ref{lem:profile} for the prescribed terminal
state.  Begin with the base forest $B[S]$ and add the local forests one at a
time.  The intersection of a local forest with the graph already present is
empty, a single terminal, or the connected edge $uv$, and no internal
vertex of the gadget has a neighbour outside its copy.  The union of two
forests along a connected subtree is a forest.  Induction over the gadgets
therefore gives an induced forest with
\[
 |S|+6q-e_Q(S)=6q+\beta(B,Q)
\]
vertices.
\end{proof}

\subsection{The full-edge specialization}

If $Q=E(B)$ and $B[S]$ is a forest, then
\[
 |S|-e_B(S)=c(B[S])\le\alpha(B),
\]
because one vertex chosen from each component is independent; equality is
attained by an independent set.  Hence Theorem~\ref{thm:transfer} recovers
\begin{equation}\label{eq:full}
 a(X(B,E(B)))=6|E(B)|+\alpha(B).
\end{equation}
For comparison, the choice $B=K_4$ gives a $76$-vertex block with induced-
forest number $37$, hence ratio $37/76>15/31$.  The improvement below comes
from retaining undecorated base edges as cycle constraints without paying the
$12$ internal vertices of a gadget on each such edge.

\section{The pentagonal-bipyramid obstruction}

The base graph must be small enough to keep the vertex count low, yet rich
enough in short cycles to constrain the decorated-edge penalty. The
pentagonal bipyramid achieves both.

Let $B=C_5*\overline{K_2}$ be the pentagonal bipyramid.  Its rim vertices
are $0,1,2,3,4$ in cyclic order; its two nonadjacent apices are $5,6$; and
each apex is adjacent to every rim vertex.  Thus $B$ is a plane
triangulation with seven vertices and fifteen edges.  Decorate only the two
rim edges
\[
 Q=\{01,23\}.
\]
Figure~\ref{fig:bipyramid} shows the resulting seven-vertex transfer core.

\begin{figure}[htbp]
\centering
\begin{tikzpicture}[x=1.05cm,y=1.05cm,line cap=round,line join=round]
  \coordinate (v0) at (-3.2,-2.2);
  \coordinate (v1) at ( 3.2,-2.2);
  \coordinate (v2) at ( .8, .346);
  \coordinate (v3) at ( 0,  .856);
  \coordinate (v4) at (-.8, .346);
  \coordinate (v5) at ( 0, -.570);
  \coordinate (v6) at ( 0, 3.3);

  \foreach \u/\v in {0/1,1/2,2/3,3/4,4/0,
    5/0,5/1,5/2,5/3,5/4,
    6/0,6/1,6/2,6/3,6/4}
    \draw[ab faint edge] (v\u)--(v\v);

  \draw[ABgreen,line width=1.2pt] (v0)--(v5)--(v2);
  \draw[ab strong edge] (v0)--(v1);
  \draw[ab strong edge] (v2)--(v3);

  \foreach \v in {1,3,4}
    \node[ab vertex] at (v\v) {$\v$};
  \node[ab vertex,fill=ABblue!12] at (v6) {$6$};
  \foreach \v in {0,2,5}
    \node[ab selected] at (v\v) {$\v$};

  \node[font=\scriptsize,text=ABblue,anchor=west] at (.22,3.3) {apex};
  \node[font=\scriptsize,text=ABblue,anchor=west] at (.22,-.57) {apex};
  \node[draw=ABred!65,fill=ABred!5,rounded corners=2pt,
    font=\scriptsize,inner sep=4pt] at (-2.05,2.35)
    {$Q=\{01,23\}$};
  \node[draw=ABgreen!70,fill=ABgreen!6,rounded corners=2pt,
    font=\scriptsize,inner sep=4pt] at (2.05,2.35)
    {$S=\{0,2,5\}$,\quad $\phi(S)=3$};
  \node[font=\scriptsize,text=ABink!70] at (0,-2.68)
    {outer face $016$};
\end{tikzpicture}
\caption{A Schlegel drawing of the pentagonal bipyramid
$B=C_5*\overline{K_2}$.  The decorated matching $Q$ is red.  The gold
vertices and green edges show the equality witness $S=\{0,2,5\}$, whose
induced subgraph is a two-edge path.}
\label{fig:bipyramid}
\end{figure}

For $S\subseteq V(B)$ put
\[
 \phi(S)=|S|
 -\ind_{\{0,1\}\subseteq S}
 -\ind_{\{2,3\}\subseteq S}.
\]

\begin{lemma}[Bipyramid obstruction]\label{lem:bipyramid}
If $B[S]$ is a forest, then $\phi(S)\le3$.  Equality is attained.
Consequently $\beta(B,Q)=3$.
\end{lemma}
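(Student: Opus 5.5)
The plan is a case analysis on how many apices $S$ contains, writing $R=S\cap\{0,1,2,3,4\}$ for the rim part. The witness $S=\{0,2,5\}$ from Figure~\ref{fig:bipyramid} induces the path $0-5-2$. It contains neither $\{0,1\}$ nor $\{2,3\}$, so $\phi(S)=3$ and equality is attained. Once the upper bound is proved, $\beta(B,Q)=3$ follows directly from the definition of $\beta$, since $e_Q(S)=\ind_{\{0,1\}\subseteq S}+\ind_{\{2,3\}\subseteq S}$ and hence $\phi(S)=|S|-e_Q(S)$.

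For the upper bound, the first case is that both apices $5,6$ lie in $S$. Each rim vertex is adjacent to both apices, and the apices are nonadjacent, so two rim vertices in $S$ would close a $4$-cycle $5x6y$. Hence $|R|\le1$, and $\phi(S)\le|S|\le3$.

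The second case is that exactly one apex lies in $S$. Two adjacent rim vertices in $S$ together with that apex form a triangle, so $R$ is independent in $C_5$. Then $|R|\le2$ and $|S|\le3$, which gives $\phi\le3$.

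The third case is that no apex lies in $S$. Then $R$ must induce a forest in $C_5$, so $|R|\le4$.
\begin{itemize}
\item If $|R|\le3$, the bound is trivial.
\item If $|R|=4$, then $R$ omits one rim vertex $x$, and $R$ contains both pairs $\{0,1\}$ and $\{2,3\}$ unless $x\in\{0,1,2,3\}$. If $x=4$, then $\phi=4-2=2$. If $x\in\{0,1,2,3\}$, exactly one of the two pairs survives, so $\phi=4-1=3$.
\end{itemize}
Thus $\phi(S)\le3$ in every case.

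Nothing here is a real obstacle. The one step that deserves care is the no-apex case with $|R|=4$, where the bound depends on the matching $Q$ meeting every $4$-vertex path of the rim. This holds because $Q$ covers four of the five rim vertices, so removing any single rim vertex leaves at least one decorated edge intact.
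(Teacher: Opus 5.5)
Your proof is correct and follows the paper's argument: the same split on the number of selected apices, the same independence bound in the one-apex case, the same observation that every four-subset of the rim meets the matching $\{01,23\}$, and the same witness $\{0,2,5\}$. The only cosmetic difference is that in the two-apex case you use the $4$-cycle $5x6y$ for every pair of rim vertices, whereas the paper uses a triangle for adjacent pairs and the $4$-cycle for nonadjacent ones; your version is slightly simpler and equally valid.
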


\begin{proof}
Suppose first that both apices are selected.  Two adjacent selected rim
vertices form a triangle with either apex, while two nonadjacent selected rim
vertices form a $4$-cycle with the two apices.  Hence at most one rim vertex
is selected, and $\phi(S)\le3$.

If exactly one apex is selected, then the selected rim vertices must be
independent in $C_5$, for an adjacent pair would form a triangle with the
apex.  There are at most two such rim vertices, again giving
$\phi(S)\le3$.

If no apex is selected, the assertion is immediate when at most three rim
vertices are selected.  A four-subset of the rim contains at least one of
the two matching edges $01,23$, and therefore pays at least one unit in
$\phi$.  All five rim vertices are forbidden because they induce $C_5$.
Thus $\phi(S)\le3$ in every case.  The set $S=\{0,2,5\}$ induces a path of
length two and has $\phi(S)=3$.
\end{proof}

Let
\[
 H=X(B,Q).
\]
Theorem~\ref{thm:transfer}, Lemma~\ref{lem:bipyramid}, and
\eqref{eq:subcounts} give
\begin{equation}\label{eq:H}
 |V(H)|=7+2\cdot12=31,
 \qquad
 |E(H)|=15+2\cdot35=85,
 \qquad
 a(H)=12+3=15.
\end{equation}

\begin{remark}[Odd bipyramids]\label{rem:odd}
For every $r \ge 2$, the same three-case argument applied to
$B_r=C_{2r+1}*\overline{K_2}$, with a maximum matching of the rim decorated,
gives
\[
 \beta(B_r,Q)=r+1,\qquad
 n_r=14r+3,\qquad
 a_r=7r+1.
\]
Thus $a_r/n_r=\frac12-1/[2(14r+3)]$, which is minimized at $r=2$.
\end{remark}

\section{The explicit 31-vertex seed}

To complete $H$ into a maximal planar graph, it remains to triangulate
the two quadrilateral faces left by the edge sums.  On each decorated
edge $(u,v)=(0,1),(2,3)$, map the local terminals $g,h$ to $u,v$ in that
order.  Map the local internal vertices
\[
 (a,b,c,d,e,f,i,j,k,l,m,n)
\]
to $7,8,\ldots,18$ in the first copy and to $19,20,\ldots,30$ in the
second.

The terminal edge $gh$ of $X$ has incident faces $ghb$ and $ghf$.
Perform the first edge sum so that the retained faces at the shared edge
$01$ are $(0,1,5)$ and $(0,1,8)$; the discarded faces $(0,1,6)$ and
$(0,1,12)$ merge
into the quadrilateral
\[
 (0,6,1,12).
\]
Perform the second edge sum so that the retained faces at $23$ are
$(2,3,5)$ and $(2,3,24)$; the other two faces merge into
\[
 (2,6,3,20).
\]
These are the only nontriangular faces.  As shown in
Figure~\ref{fig:completion}, add the two diagonals
\begin{equation}\label{eq:completion}
 6\,12,
 \qquad
 6\,20,
\end{equation}
and call the resulting graph $T$.

\begin{figure}[htbp]
\centering
\begin{tikzpicture}[x=1.35cm,y=1.35cm,line cap=round,line join=round]
  \coordinate (x6)  at ( 0, 0);
  \coordinate (x0)  at (-1.25, 1.15);
  \coordinate (x1)  at (-1.25,-1.15);
  \coordinate (x12) at (-2.55, 0);
  \coordinate (x2)  at ( 1.25, 1.15);
  \coordinate (x3)  at ( 1.25,-1.15);
  \coordinate (x20) at ( 2.55, 0);

  \fill[ABblue!7] (x0)--(x6)--(x1)--(x12)--cycle;
  \fill[ABblue!7] (x2)--(x6)--(x3)--(x20)--cycle;
  \draw[ABink!72,line width=.8pt] (x0)--(x6)--(x1)--(x12)--cycle;
  \draw[ABink!72,line width=.8pt] (x2)--(x6)--(x3)--(x20)--cycle;
  \draw[ab strong edge] (x6)--(x12);
  \draw[ab strong edge] (x6)--(x20);

  \foreach \name/\text in {x0/0,x1/1,x2/2,x3/3,x6/6,x12/12,x20/20}
    \node[ab vertex] at (\name) {$\text$};

  \node[font=\scriptsize,text=ABblue] at (-1.27,1.62)
    {face $(0,6,1,12)$};
  \node[font=\scriptsize,text=ABblue] at (1.27,1.62)
    {face $(2,6,3,20)$};
  \node[font=\scriptsize,text=ABred,fill=white,inner sep=1pt]
    at (-1.28,.22) {add $6\,12$};
  \node[font=\scriptsize,text=ABred,fill=white,inner sep=1pt]
    at (1.28,.22) {add $6\,20$};
  \node[draw=ABred!65,fill=ABred!5,rounded corners=2pt,
    font=\scriptsize,inner sep=4pt] at (0,-1.82)
    {$T=H+\{6\,12,6\,20\}$};
\end{tikzpicture}
\caption{The two nontriangular faces left by the edge sums; they share the
base apex $6$.  The red diagonals split both quadrilaterals into triangles,
giving the sphere-triangulated seed $T$.  The retained terminal edges $01$ and
$23$ are not boundary edges of these two faces.}
\label{fig:completion}
\end{figure}

\begin{proposition}[The seed]\label{prop:seed}
The graph $T$ is a simple maximal planar graph with
\[
 |V(T)|=31,
 \qquad
 |E(T)|=87,
 \qquad
 a(T)=15.
\]
Its degree multiset is
\[
 4^1\,5^{17}\,6^6\,7^7.
\]
\end{proposition}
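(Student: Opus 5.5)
The plan has four parts: count vertices and edges, check the embedding, compute $a(T)$ by sandwiching it between $a(H)$ and $15$, and read off degrees.

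\textbf{Counts and embedding.} By \eqref{eq:H}, $H$ has $31$ vertices and $85$ edges, and $T$ adds the two edges \eqref{eq:completion}, giving $|E(T)|=87=3\cdot31-6$. For maximal planarity, I would first note that each edge sum is a planar $2$-clique sum: it glues a facial triangle pair of $B$ at the shared edge to a facial pair of $X$ at $gh$. This yields a plane graph whose faces are all triangles except the two quadrilaterals $(0,6,1,12)$ and $(2,6,3,20)$. I would then check simplicity, i.e.\ that $6$ is not already adjacent to $12$ or $20$. Vertex $12$ is the image of $f$ and $20$ is the image of $b$ in the two copies. Their neighbours in $H$ lie in their own copy together with the terminals, and apex $6$ is not a terminal of either copy, so $6\,12$ and $6\,20$ are new edges. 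Inserting the diagonals then makes every face a triangle. A simple plane graph with $3n-6$ edges is maximal planar, and the edge count above serves as a consistency check.

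\textbf{The value $a(T)=15$.} The lower bound is the delicate point, since adding edges can only decrease $a$. We have $T\supseteq H$ as a spanning subgraph, so $a(T)\le a(H)=15$ by Theorem~\ref{thm:transfer} and Lemma~\ref{lem:bipyramid}. For $a(T)\ge15$, I would exhibit an explicit $15$-vertex forest that avoids the new edges. Take $S=\{0,2,5\}$, which excludes apex $6$ and so makes both diagonals irrelevant. In each copy, both terminals are selected in the first copy ($0,1$? no: $S$ contains $0$ only), so I would use the profile witness for the actual terminal state. Vertex $0$ plays the role of $g$ in the first copy with $1\notin S$, so the state is $\{g\}$ and the witness is $a,c,e,i,j,l$. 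Vertex $2$ plays the role of $g$ in the second copy, again state $\{g\}$, with the same witness. The union argument in the proof of Theorem~\ref{thm:transfer} shows this set induces a forest in $H$, and it has $3+6+6=15$ vertices. Since $6\notin F$, the induced subgraphs $T[F]$ and $H[F]$ coincide, so $F$ is also a forest in $T$. The main thing to check carefully is exactly this: that the chosen witness set does not contain both endpoints of either added diagonal.

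\textbf{Degree multiset.} This is routine bookkeeping, which I would present as a table with a total-degree check against $2\cdot 87=174$. In $B$, rim vertices have degree $4$ and apices have degree $5$. Each decorated endpoint gains its gadget degree minus one for the shared edge: $g$ gains $3$ and $h$ gains $3$. This gives degree $7$ for $0,1,2,3$, while rim vertex $4$ stays at $4$. Apex $5$ has degree $5$, and apex $6$ has degree $5+2=7$. Internal vertices keep their $X$-degrees, which are $6,6,5,5,5,6,6,5,5,5,5,5$ for $a,b,c,d,e,f,i,j,k,l,m,n$. The exceptions are $f$ in the first copy and $b$ in the second, which rise to $7$. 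Tallying then gives $4^1\,5^{17}\,6^6\,7^7$.
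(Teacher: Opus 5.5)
Your proposal is correct and follows essentially the same route as the paper: the counts, the check that the diagonals $6\,12$ and $6\,20$ are new edges, the bound $a(T)\le a(H)=15$ via the spanning subgraph $H$, an explicit $15$-vertex witness that avoids apex $6$, and the same degree tally. The only difference is the witness. You build it from $S=\{0,2,5\}$ with the $\{g\}$-profile trees (valid here, though the half-finished aside about the terminal state in the first copy should be cleaned up). The paper instead lists a path $W$ on $S=\{1,3,5\}$, chosen so that it also misses the ports $P=(0,4,6)$ and $R=(2,19,24)$ needed later in Lemma~\ref{lem:familyforest}; your set contains $0$, $2$ and $19$, so it could not be reused there.
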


\begin{proof}
The two edge sums give
\[
 |V(H)|=7+2(14-2)=31,
 \qquad
 |E(H)|=15+2(36-1)=85.
\]
 The connected graph $H$ contains neither diagonal in \eqref{eq:completion}.
 Adding them inside their two quadrilateral faces is visibly planar and
 preserves simplicity, so $T$ is simple and planar with
\[
 |E(T)|=87=3\cdot31-6.
\]
Hence every face is triangular and $T$ is maximal planar.

Since $H$ is a spanning subgraph of $T$, adding edges cannot increase the
induced-forest number, and \eqref{eq:H} gives $a(T)\le15$.  Conversely, the
vertices
\[
 W=\{1,3,5,7,9,11,13,14,16,20,22,23,26,27,29\}
\]
induce exactly the path
\[
 16-11-7-9-14-13-1-5-3-20-26-27-22-23-29.
\]
Thus $a(T)\ge15$, proving equality.

For the degree count, the gadget has eight internal vertices of degree five
and four of degree six.  Each decorated rim endpoint $0,1,2,3$ has degree
$4+4-1=7$ after the edge sum; rim vertex $4$ retains degree four; and the
apices initially have degree five.  The two completion edges raise apex $6$
from degree five to seven, first-copy vertex $12=f$ from six to seven, and
second-copy vertex $20=b$ from six to seven.  The asserted multiset follows.
\end{proof}

The seed $T$ has a unique vertex of degree four, namely vertex~$4$.
The annular amplification below eliminates this low-degree vertex in
every member of the family.

\section{Annular amplification}

To produce counterexamples of arbitrary size while maintaining the ratio
$15/31$ and minimum degree five, we join copies of $T$ along facial
triangles that are disjoint from the forest witness.

In the embedding of $T$, the ordered triples
\begin{equation}\label{eq:ports}
 P=(0,4,6),
 \qquad
 R=(2,19,24)
\end{equation}
are vertex-disjoint facial triangles.  Indeed, $P$ is the untouched base
face $046$, and $R$ is the face $gaf$ in the second gadget copy.  Both are
disjoint from the path witness $W$.  Their degree triples in $T$ are
\begin{equation}\label{eq:portdegrees}
 \deg_T(P)=(7,4,7),
 \qquad
 \deg_T(R)=(7,6,6).
\end{equation}

For disjoint ordered triangles
$U=(u_0,u_1,u_2)$ and $V=(v_0,v_1,v_2)$, define the annulus edges
\begin{equation}\label{eq:annulusedges}
 u_0v_0,
 u_1v_0,
 u_1v_1,
 u_2v_1,
 u_2v_2,
 u_0v_2.
\end{equation}
They support the six triangular faces
\begin{equation}\label{eq:annulusfaces}
\begin{gathered}
 u_0u_1v_0,
 \quad u_1v_0v_1,
 \quad u_1u_2v_1,
 \quad u_2v_1v_2,\\
 u_2u_0v_2,
 \quad u_0v_2v_0.
\end{gathered}
\end{equation}
Thus, after deleting the interiors of the two boundary faces, these six
triangles form a triangulated cylinder.  Reflecting one sphere if necessary
matches the boundary orientations.  Figure~\ref{fig:annulus-family}(a) shows
the six faces in the planar annular region.

\begin{construction}[The family $M_k$]\label{con:family}
Fix $k\ge2$ and take disjoint labelled copies $T_1,\ldots,T_k$ of $T$.
For $1\le i\le k-2$, insert the annulus
\eqref{eq:annulusedges} between $P_i$ and $R_{i+1}$.  Insert the final
annulus between $P_{k-1}$ and $P_k$.  Call the resulting graph $M_k$.
\end{construction}

The complete port schedule is shown in Figure~\ref{fig:annulus-family}(b).

\begin{figure}[htbp]
\centering
\begin{minipage}[c]{.42\textwidth}
\centering
\begin{tikzpicture}[x=.82cm,y=.82cm,line cap=round,line join=round]
  \coordinate (u0) at (0,3.0);
  \coordinate (u1) at (-2.8,-1.8);
  \coordinate (u2) at (2.8,-1.8);
  \coordinate (v0) at (0,.95);
  \coordinate (v1) at (-.95,-.65);
  \coordinate (v2) at (.95,-.65);

  \fill[ABblue!7]  (u0)--(u1)--(v0)--cycle;
  \fill[ABblue!13] (u1)--(v0)--(v1)--cycle;
  \fill[ABblue!7]  (u1)--(u2)--(v1)--cycle;
  \fill[ABblue!13] (u2)--(v1)--(v2)--cycle;
  \fill[ABblue!7]  (u2)--(u0)--(v2)--cycle;
  \fill[ABblue!13] (u0)--(v2)--(v0)--cycle;
  \fill[white] (v0)--(v1)--(v2)--cycle;

  \draw[ABink!78,line width=.85pt] (u0)--(u1)--(u2)--cycle;
  \draw[ABink!78,line width=.85pt] (v0)--(v1)--(v2)--cycle;
  \foreach \u/\v in {u0/v0,u1/v0,u1/v1,u2/v1,u2/v2,u0/v2}
    \draw[ab annulus edge] (\u)--(\v);

  \foreach \name/\text in
    {u0/u_0,u1/u_1,u2/u_2,v0/v_0,v1/v_1,v2/v_2}
    \node[ab vertex,minimum size=4.1mm] at (\name) {$\text$};
  \node[font=\scriptsize,text=ABink!70] at (0,-2.25)
    {(a) six triangular annulus faces};
\end{tikzpicture}
\end{minipage}\hfill
\begin{minipage}[c]{.55\textwidth}
\centering
\begin{tikzpicture}[x=.67cm,y=.67cm,line cap=round,line join=round]
  \draw[rounded corners=3pt,fill=ABblue!4,draw=ABink!65]
    (0,-1)--(1.6,-1)--(1.6,1)--(0,1)--cycle;
  \draw[rounded corners=3pt,fill=ABblue!4,draw=ABink!65]
    (2.8,-1)--(4.4,-1)--(4.4,1)--(2.8,1)--cycle;
  \draw[rounded corners=3pt,fill=ABblue!4,draw=ABink!65]
    (6.1,-1)--(7.9,-1)--(7.9,1)--(6.1,1)--cycle;
  \draw[rounded corners=3pt,fill=ABblue!4,draw=ABink!65]
    (9.1,-1)--(10.7,-1)--(10.7,1)--(9.1,1)--cycle;

  \node at (.8,.35) {$T_1$};
  \node at (3.6,.35) {$T_2$};
  \node at (7.0,.35) {$T_{k-1}$};
  \node at (9.9,.35) {$T_k$};
  \node[font=\large] at (5.25,0) {$\cdots$};

  \draw[ABgreen,line width=.85pt]
    (.35,-.45)--(.65,-.18)--(.95,-.55)--(1.25,-.28);
  \draw[ABgreen,line width=.85pt]
    (3.15,-.45)--(3.45,-.18)--(3.75,-.55)--(4.05,-.28);
  \draw[ABgreen,line width=.85pt]
    (6.45,-.45)--(6.75,-.18)--(7.08,-.55)--(7.52,-.28);
  \draw[ABgreen,line width=.85pt]
    (9.45,-.45)--(9.75,-.18)--(10.05,-.55)--(10.35,-.28);
  \node[font=\tiny,text=ABgreen] at (.8,-.75) {$W_1$};
  \node[font=\tiny,text=ABgreen] at (3.6,-.75) {$W_2$};
  \node[font=\tiny,text=ABgreen] at (7.0,-.75) {$W_{k-1}$};
  \node[font=\tiny,text=ABgreen] at (9.9,-.75) {$W_k$};

  \fill[ABblue] (1.6,0) circle (2.1pt);
  \fill[ABblue] (2.8,0) circle (2.1pt);
  \fill[ABblue] (4.4,0) circle (2.1pt);
  \fill[ABblue] (6.1,0) circle (2.1pt);
  \fill[ABred] (7.9,0) circle (2.1pt);
  \fill[ABred] (9.1,0) circle (2.1pt);
  \draw[ABblue,line width=1.2pt] (1.6,0)--(2.8,0);
  \draw[ABblue,line width=1.2pt,densely dotted] (4.4,0)--(6.1,0);
  \draw[ABred,line width=1.2pt] (7.9,0)--(9.1,0);

  \node[font=\scriptsize,text=ABblue,fill=white,inner sep=1pt]
    at (2.2,.38) {$P_1\! -\! R_2$};
  \node[font=\scriptsize,text=ABblue,fill=white,inner sep=1pt]
    at (5.25,.38) {$P_i\! -\! R_{i+1}$};
  \node[font=\scriptsize,text=ABred,fill=white,inner sep=1pt]
    at (8.5,.38) {$P_{k-1}\! -\! P_k$};
  \node[font=\scriptsize,text=ABink!70] at (5.35,-1.45)
    {(b) port schedule for $M_k$};
\end{tikzpicture}
\end{minipage}
\caption{Annular amplification.  (a) The blue cross-edges realize exactly
the six faces in \eqref{eq:annulusfaces}.  (b) For the ordinary joins, $P_i$
is paired with $R_{i+1}$; the last join pairs $P_{k-1}$ with $P_k$.  Each
copied path $W_i$ (green) misses both ports, so the annuli add no edge between
selected witnesses.}
\label{fig:annulus-family}
\end{figure}

Every seed uses its port $P$ exactly once, while $R$ is used precisely in
seeds $2,\ldots,k-1$.  Since $P$ and $R$ are disjoint in a seed, no vertex
lies on two annuli.

\begin{lemma}[Topology]\label{lem:topology}
For every $k\ge2$, $M_k$ is a simple sphere triangulation with
\[
 |V(M_k)|=31k.
\]
\end{lemma}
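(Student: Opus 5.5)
The plan is to prove the lemma by induction on $k$, tracking the underlying surface rather than the abstract graph.

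\textbf{Vertex count.} The annuli add only edges, never vertices. Hence $|V(M_k)|=31k$ is immediate from Proposition~\ref{prop:seed}.

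\textbf{Topology.} Each annulus joins two triangulated spheres along facial triangles. From each sphere we remove the open disk bounded by the chosen face, and glue in the six-triangle cylinder \eqref{eq:annulusfaces}. The boundary circles of the cylinder are $u_0u_1u_2$ and $v_0v_1v_2$, and consecutive faces in the listed cyclic order share the edges
\[
u_1v_0,\quad u_1v_1,\quad u_2v_1,\quad u_2v_2,\quad u_0v_2,\quad u_0v_0.
\]
This is a genuine annulus, and the connected sum of two spheres through it is again a sphere. The remark about reflecting one sphere handles orientation compatibility.

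The induction builds $M_k$ by adding $T_k$ last. Write $N_j$ for the chain $T_1,\dots,T_j$ joined by the ordinary annuli $P_i$--$R_{i+1}$. The faces used at each stage must still be faces of the current sphere. This holds because $P$ and $R$ are vertex-disjoint in each seed, so no vertex lies on two annuli, and each port face is consumed at most once.

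So $N_{k-1}$ is a sphere triangulation, with $P_{k-1}$ still a face, and the final annulus $P_{k-1}$--$P_k$ attaches $T_k$. For $k=2$ this is just the single join $P_1$--$P_2$.

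An Euler count confirms the structure. Each join removes two faces and adds six, and adds six edges. Thus $V-E+F$ changes by $0-6+4$, and the two-spheres-to-one step accounts for the $-2$ needed to go from $4$ to $2$. One also checks $E=3V-6$: starting from $87k$ edges, adding $6(k-1)$ gives $93k-6=3\cdot31k-6$.

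\textbf{Simplicity.} The annulus edges join vertices in different copies, so none of them duplicates an edge inside a seed. Within one annulus the six listed pairs are distinct. Two different annuli involve disjoint vertex sets, so they cannot share an edge either. Hence $M_k$ is a simple graph embedded as a sphere triangulation.

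\textbf{Main obstacle.} The delicate step is justifying that gluing the cylinder really yields a triangulation. Every edge must lie in exactly two faces, and every vertex link must be a single cycle. For a port vertex, say $u_1$, the link in $T$ minus the deleted face is a path from $u_0$ to $u_2$. The annulus contributes the path $u_0,v_0,v_1,u_2$ to it, closing it into a cycle. The check is identical for $u_0$, $u_2$ and for the $v_i$. Each boundary edge $u_0u_1$ loses the deleted face and gains exactly one annulus face, so it still lies in two faces.
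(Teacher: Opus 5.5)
Your proposal is correct and follows the paper's proof essentially step for step. It uses the same inductive connected sum through the six-face cylinder, the same observation that ports stay facial because $P$ and $R$ are disjoint in each seed, the same vertex-disjointness argument for simplicity, the same count $|E(M_k)|=93k-6$, and a vertex-link check that spells out more explicitly the paper's remark that every boundary edge and cross-edge has two face incidences.
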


\begin{proof}
 At each join, remove one open triangular face from the sphere assembled so
 far and one from the next unused seed sphere, then insert the triangulated
 cylinder
\eqref{eq:annulusfaces}.  The result is again a sphere.  Inductively, every
subsequent port is still facial because the two ports in an intermediate
seed are disjoint.  The copies are vertex-disjoint and each interface is new,
so no loop, parallel edge, or repeated face is introduced.

Each of the $k-1$ joins adds six edges, removes two faces, and adds six
faces.  Since $T$ has $31$ vertices, $87$ edges, and $58$ faces,
\[
 |E(M_k)|=87k+6(k-1)=93k-6
\]
and
\[
 |F(M_k)|=58k+4(k-1)=62k-4.
\]
The vertex count is unchanged by the joins.  The faces in
\eqref{eq:annulusfaces} also show directly that every old boundary edge and
every new cross-edge has two face incidences.
\end{proof}

\begin{lemma}[Exact induced-forest number]\label{lem:familyforest}
For every $k\ge2$,
\[
 a(M_k)=15k.
\]
\end{lemma}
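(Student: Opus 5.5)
We prove the two inequalities $a(M_k)\le 15k$ and $a(M_k)\ge 15k$ separately.

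\emph{Upper bound.} The plan is to delete edges and use monotonicity. Inducedness is preserved under deletion of edges: if $F\subseteq V(M_k)$ induces a forest in $M_k$, then $F$ also induces a forest in any spanning subgraph of $M_k$. Take the spanning subgraph obtained by deleting all $6(k-1)$ annulus edges \eqref{eq:annulusedges}. It is the disjoint union $T_1\cup\cdots\cup T_k$ of copies of the seed, since the annuli were the only edges between distinct copies and the copies are vertex-disjoint (Lemma~\ref{lem:topology}). Hence $F\cap V(T_i)$ induces a forest in $T_i$ for each $i$. Proposition~\ref{prop:seed} then gives $|F\cap V(T_i)|\le a(T)=15$, and summing yields $|F|\le 15k$.

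\emph{Lower bound.} Let $W_i\subseteq V(T_i)$ be the copy of the witness $W$ from Proposition~\ref{prop:seed}; each $T_i[W_i]$ is a path on $15$ vertices. Put $F=W_1\cup\cdots\cup W_k$, so $|F|=15k$. The key point is that every edge of $M_k$ not lying inside a single copy $T_i$ is an annulus edge, and both endpoints of an annulus edge lie in port triangles. It remains to check that the ports $P=(0,4,6)$ and $R=(2,19,24)$ are disjoint from $W=\{1,3,5,7,9,11,13,14,16,20,22,23,26,27,29\}$, which is immediate by inspection ($0,4,6,2,19,24\notin W$). So no annulus edge has both ends in $F$. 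Therefore $M_k[F]$ is the disjoint union of the $k$ paths $T_i[W_i]$, hence a forest, and $a(M_k)\ge 15k$.

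The only step requiring care is justifying that the annulus edges are the \emph{only} edges between copies and that no annulus edge touches $W_i$. Both follow directly from Construction~\ref{con:family}: edges are added only via \eqref{eq:annulusedges} between port vertices. I expect no real obstacle here; the substantive difficulty, $a(T)=15$, was already absorbed into Proposition~\ref{prop:seed} via the transfer law (Theorem~\ref{thm:transfer}) and Lemma~\ref{lem:bipyramid}.
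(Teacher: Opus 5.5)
Your proof is correct and follows essentially the same approach as the paper. The upper bound restricts an induced forest to each seed, using that the annulus edges only join distinct copies (your spanning-subgraph phrasing is equivalent to $M_k[V(T_i)]=T_i$), and then applies $a(T)=15$. The lower bound takes the copied witness paths $W_i$, which avoid both ports $P$ and $R$, so no annulus edge meets them.
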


\begin{proof}
Let $F$ induce a forest in $M_k$.  No annulus adds an edge within a seed, so
$M_k[V(T_i)]=T_i$.  Hence $F\cap V(T_i)$ induces a forest in $T_i$ and has at
most $15$ vertices by Proposition~\ref{prop:seed}.  Summing over the disjoint
seed vertex sets gives $|F|\le15k$.

For the reverse inequality, take the path witness $W$ in every seed.  Both
ports in \eqref{eq:ports} avoid $W$, so no annulus edge has a selected
endpoint on both sides.  The copied paths therefore induce a disjoint union
of forests on $15k$ vertices.
\end{proof}

\begin{lemma}[Minimum degree]\label{lem:mindegree}
For every $k\ge2$,
\[
 \delta(M_k)=5.
\]
\end{lemma}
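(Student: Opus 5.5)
We need to prove $\delta(M_k)=5$. The plan is to compute the degree of every vertex in $M_k$ from its degree in the seed $T$ plus the number of annulus edges it receives, using the degree multiset from Proposition~\ref{prop:seed} and the port degrees \eqref{eq:portdegrees}.

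\textbf{Step 1: degrees inside a seed.} Annuli add no edges within a seed, and no vertex lies on two annuli. Hence every vertex $x$ of $T_i$ satisfies
\[
\deg_{M_k}(x)=\deg_T(x)+\mathrm{ann}(x),
\]
where $\mathrm{ann}(x)$ is the number of annulus edges at $x$.

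\textbf{Step 2: annulus degrees.} Reading off \eqref{eq:annulusedges}, in a join between $U$ and $V$ each of $u_0,u_1,u_2,v_0,v_1,v_2$ is incident with exactly two annulus edges. Thus every port vertex used in a join gains exactly $2$, and every other vertex gains $0$.

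\textbf{Step 3: the lower bound $\delta\ge5$.} By Proposition~\ref{prop:seed}, the only vertex of $T$ with degree below five is vertex $4$, of degree four. Vertex $4$ lies in port $P$, and every seed uses $P$ exactly once: $P_i$ for $i\le k-1$ in the join with $R_{i+1}$ or $P_k$, and $P_k$ in the final join. So every copy of vertex $4$ has degree $4+2=6$. Every other vertex has degree at least $5$, and degrees only increase, so $\delta(M_k)\ge5$.

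\textbf{Step 4: the upper bound $\delta\le5$.} We need a vertex of degree exactly five. The seed has seventeen vertices of degree five, while the port vertices have degrees $7,4,7$ and $7,6,6$. Hence no degree-five vertex of $T$ is a port vertex, and each keeps degree $5$ in $M_k$. For instance, among the eight degree-five internal vertices of a gadget copy, one can take $n$ in the first copy, i.e.\ vertex $18$; it is in neither $P$ nor $R$. Therefore $\delta(M_k)=5$.

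\textbf{Main obstacle.} Nothing here is deep. The only care needed is confirming that each seed's $P$ really is used (so vertex $4$ is always raised) and that port vertices never coincide with the degree-five vertices. Both follow from the port schedule and \eqref{eq:portdegrees}.
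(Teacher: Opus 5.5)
Your proof is correct and follows essentially the same route as the paper: vertex~$4$ is the only vertex of $T$ with degree below five, it lies in the port $P$ that every seed uses exactly once, and so it gains two annulus edges; the degree-five vertices of $T$ avoid both ports by \eqref{eq:portdegrees} and therefore keep degree five. Your explicit witness, vertex $18$ (the first-copy $n$), is a concrete addition to the paper's argument and checks out.
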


\begin{proof}
The seed $T$ has a unique vertex of degree four, namely vertex~$4$, and this
vertex lies in the port $P=(0,4,6)$.  Every seed uses its port $P$ exactly once,
and every boundary vertex of an annulus gains two cross-neighbours.  Thus the
unique degree-four vertex in each seed has degree six in $M_k$.  All other seed
vertices have degree at least five, and the annular joins only add edges.
Moreover, the degree-five vertices of $T$ lie outside both ports and retain
degree five.  Hence $\delta(M_k)=5$.
\end{proof}

\begin{remark}[Degree data]\label{rem:degrees}
For completeness, the full degree counts are
\[
\begin{array}{c|rrrrr}
d&5&6&7&8&9\\ \hline
n_d&17k&5k+4&4k+2&2k-4&3k-2.
\end{array}
\]
These counts follow from the seed multiset $4^1 5^{17}6^6 7^7$: using $P$ in
every seed changes one degree $4$ to $6$ and two degrees $7$ to $9$, while
using $R$ in exactly $k-2$ seeds changes two degrees $6$ to $8$ and one degree
$7$ to $9$.  Thus $\Delta(M_k)=9$.  The counts also give the consistency check
$\sum_v\deg(v)=186k-12=2(93k-6)$.
\end{remark}

\begin{remark}[Separating triangle and connectivity]\label{rem:connectivity}
Lemma~\ref{lem:topology} implies that $M_k$ is maximal planar and therefore
$3$-connected.  The triangle $P_1$ is separating: removing its three vertices
separates the $28$ vertices of $T_1-P_1$ from all later seed vertices.  Hence
$\kappa(M_k)=3$.
\end{remark}

\begin{proof}[Proof of Theorem~\ref{thm:main}]
Lemma~\ref{lem:topology} gives a simple planar graph $M_k$ on $31k$ vertices,
Lemma~\ref{lem:familyforest} gives $a(M_k)=15k$, and
Lemma~\ref{lem:mindegree} gives $\delta(M_k)=5$.  The displayed ratio follows
immediately.
\end{proof}

\begin{corollary}
The Albertson--Berman conjecture is false.  In particular, it already fails
for simple planar graphs of minimum degree five.
\end{corollary}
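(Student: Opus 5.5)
The corollary follows from Theorem~\ref{thm:main} once we specialize to a single member of the family. Take $k=2$. By Lemma~\ref{lem:topology}, $M_2$ is a simple sphere triangulation, so it is a simple planar graph, and it has $62$ vertices. By Lemma~\ref{lem:familyforest}, $a(M_2)=30$, and
\[
 30<31=\tfrac12|V(M_2)|.
\]
So $M_2$ violates \eqref{eq:AB}, which means the Albertson--Berman conjecture fails. Lemma~\ref{lem:mindegree} gives $\delta(M_2)=5$, which yields the refinement about minimum degree five. Any $k\ge2$ would serve equally well, because $15k<31k/2$ holds for every $k\ge1$.

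This derivation contains no real difficulty. All of the substance sits in the results it invokes. Its correctness therefore rests on four facts, which I would re-audit in the following order.

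\begin{enumerate}
\item The terminal profile $p(\{g,h\})=5$ of Lemma~\ref{lem:profile}. Its proof depends on the icosahedral edge bounds of Lemma~\ref{lem:ico}, in particular part~(3) together with the complement-counting step.
\item The transfer law, Theorem~\ref{thm:transfer}.
\item The bipyramid bound $\beta(B,Q)=3$ of Lemma~\ref{lem:bipyramid}.
\item The fact that the completion edges and the annular joins never enlarge an induced forest. This holds because adding edges can only decrease $a$, and because each seed vertex set induces exactly $T$ inside $M_k$.
\end{enumerate}

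Of these, the main obstacle is the upper bound $p(\{g,h\})\le5$. I would check it first, independently of the symbolic argument, by an exhaustive enumeration. There are only $\binom{10}{6}=210$ six-subsets of $V(J)\setminus\{b,f\}$, and for each one I would test whether adding the terminals $g,h$ leaves the induced subgraph acyclic. I would also confirm that the lower-bound witness $W$ of Proposition~\ref{prop:seed} really does induce a path in $T$.
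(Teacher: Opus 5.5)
Your deduction is correct and matches the paper, which reads the corollary off Theorem~\ref{thm:main} (that is, off Lemmas~\ref{lem:topology}, \ref{lem:familyforest} and~\ref{lem:mindegree}) using $15/31<1/2$; specializing to $k=2$ is the same argument. The audit list you append is a sensible sanity check, but it is not needed for the derivation itself.
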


\section{Discussion and open problems}

The counterexample rests on the structural role of the $14$-vertex
gadget~$X$.  Its terminal profile is the driving mechanism: the twelve
internal vertices contribute up to six to any induced forest when at most one
terminal is selected, but exactly five when both terminals are selected.
This one-unit drop is what makes the construction work.  Substituting $X$
onto a base edge converts an edge constraint in the base graph into a vertex
deficit in the substituted graph, and the transfer law
(Theorem~\ref{thm:transfer}) quantifies this conversion for an arbitrary
base graph and decorated edge set.

In this construction, leaving most base edges undecorated improves the ratio.
Decorating every edge (full-edge substitution) already pushes the ratio below
$\frac{1}{2}$, but for $B=K_4$ it gives $\frac{37}{76}\approx0.487$, above
$\frac{15}{31}$.  Undecorated base edges can impose cycle constraints without
incurring the $12$ internal vertices of an additional gadget.  For the
pentagonal bipyramid $C_5*\overline{K_2}$, only two of the fifteen edges are
decorated; the two apices restrict rim forests, the odd rim prevents selecting
all rim vertices, and the matching $\{01,23\}$ penalizes every rim subset of
four or more vertices.  Remark~\ref{rem:odd} shows that, within the odd-bipyramid
family $B_r=C_{2r+1}*\overline{K_2}$ with a maximum rim matching decorated, the
ratio $a_r/n_r$ is minimized at $r=2$.

The seed~$T$ has exactly one vertex of degree four, namely vertex~$4$.  The
annular amplification uses a port containing this vertex and raises the
minimum degree of every family member $M_k$ to five.  The full degree histogram
is given in Remark~\ref{rem:degrees}.

Borodin's acyclic $5$-coloring theorem gives $a(G)\ge2n/5$ for every planar
graph~$G$, while the present family gives the upper bound $15/31$ for the
optimal constant
\[
 c=\inf\frac{a(G)}{|V(G)|}.
\]
Thus
\[
 \frac25\le c\le\frac{15}{31}.
\]
Determining $c$, or improving either side of this interval, remains open.

We close with three specific open questions.

\begin{enumerate}
\item What is the exact value of~$c$?  The current range
      $2/5\le c\le 15/31$ is wide, and there is room to improve both the
      lower and upper bounds.

\item Does the Albertson--Berman inequality fail for $4$-connected planar
      graphs?  The graphs $M_k$ contain separating triangles; in particular,
      $P_1$ separates $T_1-P_1$ from the remaining seeds
      (Remark~\ref{rem:connectivity}).  Thus this construction does not
      address the $4$-connected case.

\item What is the smallest counterexample to the conjecture?  The seed~$T$
      has $31$ vertices, but this is the minimum within our construction;
      smaller counterexamples from other constructions may well exist.
\end{enumerate}

\section{Acknowledgments and AI disclosure}

The initial gadget and several proof ideas emerged from exploratory sessions 
with OpenAI's GPT-5.6 Sol, which also assisted in preparing the manuscript 
and verification code. The author is responsible for the problem selection, 
the mathematical verification, and the final form of all proofs, and assumes 
full responsibility for the correctness and content of the paper.

\appendix
\section{Computational verification}\label{app:verification}

The proof above is entirely symbolic.  The accompanying script
\path{verify_stronger_ab_family.py} independently reconstructs the
$31$-vertex seed from the data stated in the paper: the $14$-vertex gadget
rotation system, the pentagonal bipyramid, the decorated edges $01$ and
$23$, the stated vertex labelling, and the two completion edges $6\,12$
and $6\,20$.  It then computes the maximum induced-forest order of the
reconstructed seed directly by two independent exact algorithms: a
standard-library branch-and-bound minimum feedback vertex set solver and
a $0$--$1$ integer linear program with iteratively separated cycle cuts,
implemented with SciPy/HiGHS.  Both return
\[
 a(T)=15.
\]
Neither optimization uses the terminal profile, the transfer theorem, the
displayed $15$-vertex witness, or the sphere certificate to obtain its
optimum.

The script also retains the finite checks used in the earlier verifier:
the gadget embedding and its four terminal-conditioned optimizations over
$2^{12}$ internal subsets, the $2^7$ subsets of the
pentagonal-bipyramid core, the explicit seed witness and sphere
certificate, the two facial ports, and the seed degrees. For any supplied
$k\ge2$, the script constructs the corresponding annular member and checks 
its sphere certificate, the copied $15k$-vertex forest witness, the degree 
histogram, and an explicit separating three-set.  The script does not directly
optimize the full graph $M_k$; the upper bound $a(M_k)\le15k$, the
universal quantifier over $k$, and the separating-triangle observation
are supplied by the arguments in the body.  Thus the computation is redundant
independent certification of the finite seed and construction data rather
than a premise of the main theorem.

\end{document}